\documentclass[alphabetic]{amsart}

\usepackage{enumerate, longtable}
\usepackage{amsmath, amscd, amsfonts, amsthm, amssymb, latexsym, comment, stmaryrd, graphicx, dsfont, amsaddr, mathtools, stmaryrd}
\usepackage{xcolor}
\usepackage[all]{xy}
\usepackage{fullpage}
\usepackage{tikz}
\usepackage[
        colorlinks,
        linkcolor=red,  citecolor=blue,
        backref
]{hyperref}

\usepackage{amstext} 
\usepackage{array}
\usepackage{cleveref}

\newtheorem{claim}{Claim}[section]

\newtheorem{conj}[claim]{Conjecture} 
\newtheorem{question}[claim]{Question} 
\newtheorem{thm}{Theorem}
\newtheorem{prop}[claim]{Proposition}

\theoremstyle{definition}

\newtheorem{rem}[claim]{Remark}


\newcommand{\F}{{\mathbb F}}
\newcommand{\Z}{{\mathbb Z}}
\newcommand{\Q}{{\mathbb Q}}

\newcommand{\R}{{\mathbb R}}

\newcommand{\N}{{\mathbb N}}

\newcommand{\Gal}{{\mathrm{Gal}}}
\newcommand{\Aut}{{\mathrm{Aut}}}

\newcommand{\Ram}{{\mathrm{Ram}}}

\newcommand{\disc}{{\mathrm{disc}}}

\numberwithin{equation}{section}

\author{Alexei Entin}
\address{Raymond and Beverly Sackler School of Mathematical Sciences, Tel Aviv University, Tel Aviv 69978, Israel}
\email{aentin@tauex.tau.ac.il}
\title{Automorphism Groups of Finite Extensions of Fields and the Minimal Ramification Problem}

\begin{document}

\begin{abstract} We study the following question: given a global field $F$ and finite group $G$, what is the minimal $r$ such that there exists a finite extension $K/F$ with $\Aut(K/F)\cong G$ that is ramified over exactly $r$ places of $F$? We conjecture that the answer is $\le 1$ for any global field $F$ and finite group $G$. In the case when $F$ is a number field we show that the answer is always $\le 4[F:\Q]$. We show that assuming Schinzel's Hypothesis H the answer is always $\le 1$ if $F$ is a number field. We show unconditionally that the answer is always $\le 1$ if $F$ is a global function field.

We also show that for a broader class of fields $F$ than previously known, every finite group $G$ can be realized as the automorphism group of a finite extension $K/F$ (without restriction on the ramification).

An important new tool used in this work is a recent result of the author and C. Tsang, which says that for any finite group $G$ there exists a natural number $n$ and a subgroup $H\leqslant S_n$ of the symmetric group such that $N_{S_n}(H)/H\cong G$.\end{abstract}

\maketitle

\section{Introduction}

The Inverse Galois Problem (IGP) asks whether any finite group can be realized as the Galois group of some finite Galois extension $K/\Q$. It remains largely open, though many interesting cases have been solved, see e.g. \cite{MaMa18, Sha58, Zyw15, Zyw23}.
A refinement of this problem, called the Minimal Ramification Problem (MRP), asks for a given finite group $G$ what is the minimal number of ramified places of $\Q$ (by a place of $\Q$ we mean a finite prime or the infinite place) in a Galois extension $K/\Q$ with $\Gal(K/\Q)\cong G$. Let us denote this number by 
$r(G)$. If there is no Galois extension of $\Q$ with Galois group $G$ we set $r(G)=\infty$. It was conjectured by Boston and Markin \cite[Conjecture 1.2]{BoMa09} that for $G\neq 1$ we have $r(G)=\max(d(G^{\mathrm{ab}}),1)$, where $d(H)$ denotes the minimal number of generators of a group $H$. For progress on the MRP see \cite{BoMa09, Pla04, KNS10, BaSc20, BEF23}. The function field analogue of this problem was studied by the author, Bary-Soroker and Fehm \cite{BEF23}, by the author and Pirani \cite[\S 1.1]{EnPi24} and by Shusterman \cite{Shu24}.

In another direction, the following weak form of the IGP has been considered: given a finite group $G$, does there exist a finite extension $K/\Q$, not necessarily Galois, such that $\Aut(K/\Q)\cong G$? This question was answered affirmatively for every finite group $G$ by E. Fried and Koll\'ar \cite{FrKo78} (their proof contained a small gap later corrected by M. Fried \cite{Fri80}). A simpler alternative proof, using the Hilbert Irreducibility Theorem (HIT), was given by M. Fried \cite{Fri80}. A more elementary argument avoiding the use of HIT was given by Geyer \cite{Gey83}. The result has been generalized to all global base fields (in place of $\Q$) by Takahashi \cite{Tak80}, to all Hilbertian fields by Legrand and Paran \cite{LePa18} and to an even broader class of base fields by Deschamps and Legrand \cite{DeLe21}. In the present paper we give a new and shorter proof of these results, which applies to an even broader class of fields (see Theorem \ref{thm: not just hilbertian} and its proof below). For the corresponding problem for transcendental extensions, with possibly infinite automorphism groups, see \cite{DuGo87}.

The main focus of the present paper is the problem of realizing a given finite group $G$ as $\Aut(K/F)$ for some finite extension $K/F$ of a fixed global field $F$, such that the number of places of $F$ ramified in $K/F$ is as small as possible.

\begin{conj}\label{conj: main} Let $F$ be a number field and $G$ a finite group. There exists a finite extension $K/F$ such that $\Aut(K/F)\cong G$, ramified over at most a single place of $F$.\end{conj}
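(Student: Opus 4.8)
The plan is to reduce Conjecture \ref{conj: main} to the Minimal Ramification Problem for symmetric groups and then to solve the latter with a single ramified place. First I would invoke the Entin--Tsang theorem quoted in the abstract to fix, for the given finite group $G$, an integer $n$ and a subgroup $H\leqslant S_n$ with $N_{S_n}(H)/H\cong G$. The elementary observation that drives everything is the following: if $\tilde K/F$ is a Galois extension with $\Gal(\tilde K/F)\cong S_n$ and $K:=\tilde K^H$ is the fixed field of $H$, then $\Aut(K/F)\cong N_{S_n}(H)/H\cong G$ (since $\sigma\in\Gal(\tilde K/F)$ stabilizes $\tilde K^H$ exactly when $\sigma\in N_{S_n}(H)$, and acts trivially on it exactly when $\sigma\in H$). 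Moreover any place of $F$ that is unramified in $\tilde K/F$ is unramified in every subextension, so the set of places ramified in $K/F$ is contained in that of $\tilde K/F$. Thus the conjecture for $G$ follows once we realize $S_n$ as a Galois group over $F$ that is ramified over at most one place; note that the choice of isomorphism $\Gal(\tilde K/F)\cong S_n$ is immaterial, as $N_{S_n}(H)/H$ depends on $H$ only up to conjugacy.

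So the entire difficulty is concentrated in realizing $S_n$ over $F$ with a single ramified place, i.e. the MRP for $S_n$. To attack this I would begin from an explicit one-parameter family $f(x,t)\in\mathcal{O}_F[x,t]$ of degree-$n$ polynomials whose Galois group over $F(t)$ is the full symmetric group $S_n$ (for instance a Morse-type family or a suitable trinomial family, for which the $S_n$-property is classical). Since number fields are Hilbertian, the HIT guarantees that the specialization $f(x,t_0)$ still has Galois group $S_n$ for all $t_0$ in a Hilbert subset of $\mathcal{O}_F$. The places of $F$ ramifying in the splitting field of $f(x,t_0)$ are contained in the places dividing the discriminant $\disc_x f(x,t_0)$, together with a fixed finite set $S_0$ of bad places (those dividing $n$, the leading coefficient, and similar).

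The heart of the argument, and the step I expect to be the main obstacle, is to choose $t_0$ so that exactly one place survives. My plan is two-fold: (i) impose congruence conditions on $t_0$ modulo suitable powers of the places in $S_0$, forcing $f(x,t_0)$ to have good, unramified reduction there; and (ii) among $t_0$ lying in the resulting arithmetic progression, arrange that the squarefree part of $\disc_x f(x,t_0)$ generates a single prime ideal of $\mathcal{O}_F$, so that this prime is the only ramified place. Step (ii) is precisely where Schinzel's Hypothesis H enters: after writing the discriminant as (square)$\,\times\,g(t)$ with $g$ primitive, Hypothesis H in its number field form yields infinitely many $t_0$ in the prescribed progression with $g(t_0)$ prime, and intersecting with the Hilbert set preserves the Galois group $S_n$. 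The genuinely hard, unconditional part is to replace this appeal to Hypothesis H: producing a prime (or prime-times-square) value of a polynomial is beyond current technology, which is exactly why the statement is established only conditionally over number fields and remains a conjecture in general. Substituting the weaker, unconditional sieve input — polynomial values with boundedly many prime factors — yields instead a finite bound on the number of ramified places (and is consistent with the unconditional $\leq 4[F:\Q]$ of the abstract), but not the optimal $\leq 1$.

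Finally, I would note that the same family-plus-specialization strategy settles the function field case unconditionally, because over $\F_q(t)$ the analogue of Hypothesis H — the existence of specializations at which $g$ takes prime (irreducible) values — is a theorem rather than a conjecture; this is what underlies the unconditional $\leq 1$ result for global function fields stated in the abstract.
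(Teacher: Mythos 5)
Your overall architecture coincides with the paper's conditional proof of this conjecture (Theorem \ref{cor: main}(iii)): reduce via the Entin--Tsang normalizer-quotient result (Proposition \ref{prop: group theory main}) and the Galois-theoretic observation $\Aut(\tilde K^H/F)\cong N_{S_n}(H)/H$ (Proposition \ref{prop: galois theory}) to showing $r_F(S_n)\le 1$, then attack the latter with a one-parameter $S_n$-family, HIT, congruence conditions, and Hypothesis H. However, your sketch of the hard step has three genuine gaps that the paper's \S\ref{sec: schinzel} is specifically engineered to close. First, you invoke ``Hypothesis H in its number field form'' to produce a prime value of the discriminant in $\mathcal O_F$; the paper deliberately uses only the classical Hypothesis H over $\Z$ (as Proposition \ref{prop: schinzel} states), and the device that makes this possible is the norm trick: one forms $H(\mathbf A,T)=\prod_{j=1}^d D_j(\mathbf A,T)=N_{F/\Q}(\disc_X f)$, shows this product of conjugate factors is irreducible over $\Q$ (after choosing the primitive element $\alpha$ so the $D_j$ are non-associate), and then a rational prime value of $H$ forces $(\disc_X f(X;\mathbf a,t))$ to be a prime ideal of $\mathcal O_F$. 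Without this, your appeal to a number-field Hypothesis H is an extra, nonstandard assumption. Second, your claim that one can take the $t_0$ produced by Hypothesis H and ``intersect with the Hilbert set'' does not work as stated: Hypothesis H only produces primes along values of fixed polynomials, so membership in the Hilbert set must itself be expressible by congruence conditions. This is exactly why the paper needs the refined HIT (Proposition \ref{prop: hit}), which shows a Hilbert subset of $\Q$ contains a positive proportion of residues modulo a suitable squarefree $v$ with boundedly many prime factors; your proposal is missing this lemma, and it is not an off-the-shelf form of HIT.

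Third, you ignore archimedean places, which count as places throughout the paper (a ramified real place would already give two ramified places together with $\mathfrak q$). The paper's family $f(X;\mathbf A,T)$ is built around $\prod_{i=1}^n(X-i)$ precisely so that for every real embedding $\sigma_j$ the roots of $f(tX;\mathbf a,t)^{\sigma_j}$ tend to $1,2,\ldots,n$ as $t\to\infty$, making all roots real and the splitting field unramified at infinity. A generic Morse or trinomial family, as you propose, will typically have complex roots and hence ramification at the real places, so your choice of family is not merely a matter of convenience. Relatedly, the verification that $H(\mathbf a,T)$ has no fixed prime divisor (needed for Hypothesis H) is more delicate than ``$g$ primitive'' --- primitivity does not exclude a fixed prime divisor (e.g. $T^2+T+2$ is primitive but always even) --- and the paper handles small primes $p\le dn(n-1)$ via the auxiliary factor $P$ and condition (iii), and large primes by comparing $p$ with $\deg_T H(\mathbf a,T)=dn(n-1)$. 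Your diagnosis of where unconditionality fails, and your remark about the function field analogue, are accurate; but as a conditional proof the proposal is incomplete without the three ingredients above.
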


A conditional proof (assuming Schinzel's Hypothesis H) and partial results towards Conjecture \ref{conj: main} will be given in \S\ref{sec: number fields}. The question of whether one can find an everywhere unramified extension with $\Aut(K/F)=G$ is more difficult and will be discussed briefly in \S\ref{sec: unramified}.

We will show that our problem for a general finite group $G$ is closely related to the MRP in the special case of the symmetric and alternating groups. For a finite extension of global fields $K/F$ let us denote by $\Ram(K/F)$ the set of places of $F$ ramified in the extension and define
$$r_F(G)=\min_{K/F\,\mathrm{Galois}\atop{\Gal(K/F)\cong G}}|\Ram(K/F)|,$$ setting $r_F(G)=\infty$ if there is no Galois extension $K/F$ with $\Gal(K/F)=G$. Our main result is the following

\begin{thm}\label{thm: main} Let $F$ be a global field, $G$ a finite group. There exists a finite extension $K/F$ such that $\Aut(K)\cong G$, ramified over at most
$r=\underline{\lim}_{n\to\infty}\min(r_F(S_n),r_F(A_n))$ places of $F$.\end{thm}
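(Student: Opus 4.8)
The plan is to realize $G$ as the automorphism group of a subfield of a symmetric- or alternating-group extension of $F$, exploiting the following standard fact: if $L/F$ is Galois with group $\Gamma=\Gal(L/F)$ and $H\leqslant\Gamma$, then the fixed field $K=L^H$ satisfies $\Aut(K/F)\cong N_\Gamma(H)/H$, since an element of $\Gamma$ induces an $F$-automorphism of $K$ exactly when it normalizes $H$, and acts trivially on $K$ iff it lies in $H$. Moreover $\Ram(K/F)\subseteq\Ram(L/F)$, so if $L/F$ realizes $\Gamma$ with the minimal number $r_F(\Gamma)$ of ramified places, then $K/F$ is ramified over at most $r_F(\Gamma)$ places. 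Thus it suffices to realize $G\cong N_\Gamma(H)/H$ with $\Gamma\in\{S_n,A_n\}$ for $n$ chosen so that $\min(r_F(S_n),r_F(A_n))$ is as small as possible.

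The heart of the argument is a purely group-theoretic Key Lemma: for every sufficiently large $n$ there exist $H\leqslant S_n$ and $H'\leqslant A_n$ with $N_{S_n}(H)/H\cong G\cong N_{A_n}(H')/H'$. Granting this, I would finish as follows. The sequence $\min(r_F(S_n),r_F(A_n))$ consists of integers with lower limit $r$, so all but finitely many terms are $\ge r$ while infinitely many equal $r$; I would choose $n$ large enough for the Key Lemma to apply and with $\min(r_F(S_n),r_F(A_n))=r$. If the minimum is attained by $S_n$, I realize $S_n=\Gal(L/F)$ with $r$ ramified places and set $K=L^H$; if it is attained by $A_n$, I use $A_n$ and $H'$ instead. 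In either case $\Aut(K/F)\cong G$ and $K/F$ is ramified over at most $r$ places.

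To prove the Key Lemma I would begin from the Entin--Tsang theorem, which supplies some $n_0$ and $H_0\leqslant S_{n_0}$ with $N_{S_{n_0}}(H_0)/H_0\cong G$, and first arrange that $H_0$ acts without fixed points. The passage from a single $n_0$ to all large $n$ is by a block (padding) construction: writing the permutation domain as a disjoint union $A_0\sqcup B$ with $H_0$ acting on $A_0$ and a fixed-point-free \emph{transitive} group $C\leqslant\Sym(B)$ on $B$, one takes $H=H_0\times C\leqslant S_{n_0+|B|}$. Since both factors are fixed-point-free and their nontrivial orbits have different pointwise stabilizers inside $H_0\times C$, no permutation normalizing $H$ can exchange the blocks $A_0$ and $B$; hence $N_{S_{n_0+|B|}}(H)=N_{S_{n_0}}(H_0)\times N_{\Sym(B)}(C)$ and the normalizer quotient splits as $G\times\bigl(N_{\Sym(B)}(C)/C\bigr)$. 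Choosing $C$ to be a transitive maximal subgroup of $\Sym(B)$ different from the alternating group makes it self-normalizing, so the second factor is trivial and the quotient is exactly $G$. Such $C$ exists in $S_m$ for every large $m$ (imprimitive wreath-type subgroups for composite $m$, and affine-type primitive subgroups containing an odd permutation for prime $m$), which produces $H$ for all large $n$ and proves the $S_n$ half.

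The $A_n$ half runs along the same lines but requires keeping everything inside the alternating group, and this parity bookkeeping is where I expect the main difficulty. Here one needs an \emph{even} base case, i.e. a fixed-point-free $H_0\leqslant A_{n_0}$ whose symmetric normalizer quotient $N_{S_{n_0}}(H_0)/H_0$ is still $\cong G$, together with fixed-point-free transitive padding groups $C^A\leqslant A_m$ that are self-normalizing in $A_m$ but whose $S_m$-normalizer contains an odd permutation (for instance $C^A=(S_a\wr S_b)\cap A_m$ for composite $m=ab$, which is maximal in $A_m$ yet normalized by the odd elements of $S_a\wr S_b$). Repeating the block computation inside $A_{n_0+m}$ and intersecting the symmetric normalizer with $A_{n_0+m}$, the constraint $\operatorname{sgn}(g_1)\operatorname{sgn}(g_2)=1$ together with the presence of odd normalizing elements on the $B$-block forces the image of the projection to $N_{S_{n_0}}(H_0)/H_0$ to be all of $G$, while the kernel is exactly $H\cap A_{n_0+m}$. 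The delicate point, and the one I would spend the most care on, is producing the even fixed-point-free base case with the correct symmetric normalizer quotient out of the a priori parity-unconstrained Entin--Tsang subgroup; once this is in hand, the sign bookkeeping in the padding step is routine.
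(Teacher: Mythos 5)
Your reduction of the Theorem to a group-theoretic ``Key Lemma'' is exactly the paper's proof: combine Proposition \ref{prop: galois theory} (fixed field $K=L^H$ has $\Aut(K/F)\cong N_\Gamma(H)/H$), the containment $\Ram(K/F)\subseteq\Ram(L/F)$, and the observation that a finite lower limit of an integer-valued sequence is attained infinitely often, then choose $n\ge n_0(G)$ with $\min(r_F(S_n),r_F(A_n))=r$ and take $\Gamma\in\{S_n,A_n\}$ accordingly. The only divergence is that your Key Lemma is verbatim the result the paper simply cites: \cite[Corollary 1.2]{EnTs24_1} (Proposition \ref{prop: group theory main} in the paper) already asserts, for every finite group $G$, the existence of $H\leqslant\Gamma$ with $N_\Gamma(H)/H\cong G$ for \emph{all} sufficiently large $n$ and for \emph{both} $\Gamma=S_n$ and $\Gamma=A_n$. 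So your entire padding construction is an attempt to re-derive, from a weakened form of the citation, something available at full strength; granting the citation, your proof of the Theorem is complete and identical to the paper's.

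Taken as a standalone argument, however, your bootstrapping has real problems, one of which you acknowledge. First, ``arranging'' $H_0$ fixed-point-free is not innocuous: if $H_0$ has $k\ge 2$ fixed points then $N_{S_{n_0}}(H_0)$ splits off a full $\Sym_k$ on the fixed-point set, so passing to the support changes the normalizer quotient, and it is not clear you can recover $G$. In fact fixed-point-freeness is not the right tool: choosing $|B|>n_0$ makes $B$ the unique $H$-orbit of its size, so every element of $N_{S_{n_0+|B|}}(H)$ preserves $B$ setwise (conjugation sends orbits to orbits of equal cardinality), which yields $N(H)=N_{S_{n_0}}(H_0)\times N_{\Sym(B)}(C)$ more robustly than your ``different pointwise stabilizers'' claim --- as stated, that claim is unverified, since an automorphism of $H_0\times C$ induced by conjugation need not respect the direct factors, and you never rule out an isomorphism between the relevant stabilizers. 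These issues are fixable. Second, and this is the genuine gap you flag yourself: the $A_n$ half requires an even fixed-point-free base case $H_0\leqslant A_{n_0}$ whose \emph{symmetric} normalizer quotient $N_{S_{n_0}}(H_0)/H_0$ is still $G$, and you do not produce one; without it the sign bookkeeping (which is otherwise correct, given padding groups $C^A\leqslant A_m$ self-normalizing in $A_m$ with odd elements in $N_{S_m}(C^A)$) has nothing to run on. This is precisely the kind of parity difficulty that \cite{EnTs24_1} resolves, which is why the paper cites it rather than deducing the alternating case from the symmetric one.
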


We will use Theorem \ref{thm: main} to derive partial results and evidence towards Conjecture \ref{conj: main}.

\begin{thm}\label{cor: main} Let $F$ be a number field and $G$ a finite group. 
\begin{enumerate}\item[(i)] There exists a finite extension $K/F$ such that $\Aut(K/F)\cong G$, ramified over at most $4[F:\Q]$ places of $F$.
\item[(ii)] Assume that $r_\Q(S_n)= 1$ (this is a special case of the Boston-Markin conjecture mentioned above) for infinitely many $n$. Then Conjecture \ref{conj: main} holds for $F=\Q$.
\item[(iii)] Assume Schinzel's Hypothesis H (see \cite[p. 4]{Guy04} for its statement). Then Conjecture \ref{conj: main} holds.\end{enumerate}\end{thm}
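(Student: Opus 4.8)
The three parts share a common reduction. By Theorem~\ref{thm: main} there is a finite extension $K/F$ with $\Aut(K/F)\cong G$ ramified over at most $r=\underline{\lim}_{n\to\infty}\min(r_F(S_n),r_F(A_n))$ places, so in each case it suffices to bound this quantity $r$. Part (ii) is then immediate: if $r_\Q(S_n)=1$ for infinitely many $n$, then $\min(r_F(S_n),r_F(A_n))\le 1$ for infinitely many $n$ over $F=\Q$, whence $r\le 1$ and Theorem~\ref{thm: main} yields $\Aut(K/\Q)\cong G$ ramified over a single place, which is exactly Conjecture~\ref{conj: main} for $\Q$. The real content of (i) and (iii) is therefore to realize $S_n$ (or $A_n$) over $F$ with very few ramified places for infinitely many $n$.

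For (iii) the plan is to fix, for each $n$ in a suitable infinite set, a one-parameter family $f_t(x)\in\mathcal{O}_F[t][x]$ of monic degree-$n$ polynomials (for instance a trinomial $x^n+ax+t$, or a pencil $h(x)-t$ with $h$ Morse) and to arrange that $\disc_x f_t$ equals a unit times a single irreducible polynomial $d(t)$ over $F$ that has no fixed prime divisor. I would then invoke Schinzel's Hypothesis H to choose $t\in\mathcal{O}_F$ for which $d(t)$ generates a prime ideal $\mathfrak{p}$. Hypothesis H tolerates finitely many congruence restrictions on $t$, which I would use to prescribe Frobenius cycle types at a few auxiliary unramified primes (say an $(n-1)$-cycle together with a transposition) so as to force $\Gal(f_t/F)$ to be all of $S_n$; analogous archimedean (open) conditions would make the splitting field unramified at every infinite place. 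Since a finite prime ramifies in the Galois closure exactly when it ramifies in $F(\alpha)$, and the latter divides $\disc_x f_t$, the resulting $S_n$-extension is ramified only at $\mathfrak{p}$. Thus $r_F(S_n)=1$ for infinitely many $n$, so $r\le 1$ and Conjecture~\ref{conj: main} follows for every number field $F$.

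For (i) I would run the same skeleton but replace Hypothesis H by an unconditional input. The idea is to engineer the family so that the varying part of $\disc_x f_t$ is a polynomial in $t$ of \emph{bounded} degree and with a bounded number of irreducible factors (independent of $n$), to which an almost-prime sieve applies with a uniform constant: such a value can be forced to be a product of boundedly many primes of $\Q$ infinitely often. Each rational prime lies below at most $[F:\Q]$ primes of $F$, and $F$ has at most $[F:\Q]$ archimedean places; combining the bounded number of finite primes coming from the sieve, those coming from the fixed factor, and the archimedean places (each contribution scaled by $[F:\Q]$) produces the stated bound $4[F:\Q]$ on $r$, and hence on the number of ramified places.

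The main obstacle lies entirely in the discriminant step, where two requirements pull against each other. To apply Hypothesis H cleanly in (iii) one needs the discriminant to be a unit times a \emph{single} irreducible polynomial with no fixed prime divisor, while for the uniform constant in (i) one needs the uncontrolled part of the discriminant to have bounded degree and boundedly many factors uniformly in $n$; at the same time the family must retain Galois group exactly $S_n$ and admit control of the archimedean places. Verifying irreducibility and the absence of a fixed prime divisor for a family valid for infinitely many $n$, and keeping the unconditional almost-prime quality good enough to reach precisely $4[F:\Q]$, are the delicate points that the construction must resolve.
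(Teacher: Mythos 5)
Your reduction to bounding $r=\underline{\lim}_n\min(r_F(S_n),r_F(A_n))$ via Theorem \ref{thm: main} is exactly the paper's, and part (ii) is correct as you state it. But both (i) and (iii) have genuine gaps. For (iii), the theorem assumes Hypothesis H \emph{in its usual form over the integers}, whereas your plan needs prime values of an irreducible $d(t)\in\mathcal O_F[t]$ at arguments $t\in\mathcal O_F$ --- a number-field generalization of Hypothesis H that is not what is being assumed. The paper's key device, which your sketch is missing, is to work with the norm: it builds $f(X;\mathbf A,T)$ with a primitive element $\alpha$ of $F/\Q$ inserted into the constant coefficient so that the $d=[F:\Q]$ conjugate discriminant polynomials $D_j(\mathbf A,T)$ are distinct, non-associate, and transitively permuted by $\Gal(\Qb/\Q)$; hence $H(\mathbf A,T)=\prod_j D_j=N_{F/\Q}(\disc_X f)$ is irreducible in $\Q[\mathbf A,T]$, and after a Hilbert-irreducibility specialization of $\mathbf A$ one applies ordinary Hypothesis H over $\Z$ to make the rational integer $H(\mathbf a,t)$ prime, which forces $(\disc_X f(X;\mathbf a,t))$ to be a prime ideal of $\mathcal O_F$. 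Separately, your archimedean step fails for the concrete families you propose: a trinomial $x^n+ax+t$ has at most $3$ real roots (its derivative has at most $2$), and $h(x)-t$ has at most $2$ real roots once $|t|$ exceeds the critical values of $h$ --- yet Hypothesis H only supplies arbitrarily large $t$ in a progression. So for totally real $F$ (in particular $F=\Q$) the splitting field would ramify at real places and you lose the single-place conclusion. The paper avoids this by taking coefficients $c_i(T^i+A_i)$ modeled on $\prod_{i=1}^n(X-i)$, so that the roots of $f(tX;\mathbf a,t)^{\sigma_j}$ tend to $1,2,\ldots,n$ as $t\to\infty$ and the splitting field is totally real at every real embedding for $t$ large.

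For (i), your sieve strategy cannot reach the stated constant. The paper simply cites Bary-Soroker--Schlank \cite[Theorem 1.5]{BaSc20}: for every $n$ there are infinitely many pairwise linearly disjoint $S_n$-extensions $L/\Q$ ramified over exactly $4$ places (in their construction $\{p,q,r,\infty\}$ with $r=n^np+(n-1)^{n-1}q$, all three \emph{prime} via Green--Tao's linear equations in primes \cite[Corollary 1.7]{GrTa10}); one picks $L$ linearly disjoint from $F$ and base-changes, so each of the $4$ rational places lies under at most $[F:\Q]$ places of $F$, giving $r_F(S_n)\le 4[F:\Q]$ for \emph{all} $n$. An almost-prime sieve producing $r=P_k$ with $k\ge 2$ already yields at least $3+k>4$ ramified places of $\Q$, and no uniform-constant weighted sieve is known to make a linear form in two prime variables prime; that simultaneity is precisely the Green--Tao input, not a sieve output. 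Note also that the factor $4[F:\Q]$ arises multiplicatively (places of $F$ above each of the $4$ rational places), not by summing separate contributions as in your accounting.
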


In addition to Theorem \ref{thm: main}, a key ingredient in the proof of Theorem \ref{cor: main}(iii) is the following proposition, which generalizes \cite[Theorem 1.11]{BEF23} to arbitrary number fields and is of independent interest.

\begin{prop}\label{prop: schinzel} Assume Schinzel's Hypothesis H (in its usual form over the integers). Then $r_F(S_n)\le 1$ for any number field $F$.\end{prop}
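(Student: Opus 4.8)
The plan is to realize $S_n$ over $F$ as the Galois group of the splitting field of a specialized Morse polynomial whose discriminant is forced to be, up to a fixed constant, a single rational prime by Schinzel's Hypothesis~H. First I would choose a monic $g\in\mathcal O_F[x]$ of degree $n$ that is Morse (its $n-1$ critical points are simple and its critical values $c_1,\dots,c_{n-1}$ are pairwise distinct), so that the one-parameter family $g(x)-t$ has Galois group $S_n$ over $F(t)$: distinct critical values make the monodromy around the $n-1$ finite branch points a set of transpositions whose associated graph is connected, forcing the full symmetric group. The crucial extra requirement I would impose on $g$ is that the full set of conjugates $\{\sigma c_i : \sigma\colon F\hookrightarrow\Qb,\ 1\le i\le n-1\}$ form a single Galois orbit over $\Q$; a generic choice of $g$ achieves this.

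The point of the orbit condition is that $\disc_x\big(g(x)-t\big)=\pm a\prod_{i=1}^{n-1}(t-c_i)$ for a nonzero constant $a$, so its norm $N_{F/\Q}\disc_x(g(x)-t)=\pm a'\prod_{\sigma,i}(t-\sigma c_i)$ is, up to a fixed integer constant $a'$, a \emph{single} irreducible polynomial $M(t)\in\Z[t]$ of degree $(n-1)[F:\Q]$. After a linear normalization of $t$ I would arrange $M$ to have positive leading coefficient and no fixed prime divisor, and then apply Schinzel's Hypothesis~H over $\Z$ (this is exactly where the hypothesis enters, and the reason the number-field statement reduces to the \emph{integer} form of~H) to obtain infinitely many $t_0\in\Z$ with $M(t_0)$ a rational prime $p$. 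For such $t_0$ the relative discriminant ideal $\disc(L_{t_0}/F)$, where $L_{t_0}=F[x]/(g(x)-t_0)$, divides $\big(\disc_x(g(x)-t_0)\big)$ and has norm dividing $a'p$; once the finitely many primes dividing $a'$ (and the critical-point denominators, the index, and $\disc(F/\Q)$) are excluded, the discriminant ideal has prime norm $p$, hence is a single prime $\mathfrak p$ of residue degree one. Thus exactly one finite place of $F$ ramifies, tamely, with inertia a transposition.

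To pin down the Galois group and control the remaining places I would exploit the congruence freedom inside Schinzel's Hypothesis~H. Imposing finitely many congruence conditions on $t_0$ (compatible with the hypothesis), I would force $p$ to avoid the fixed bad primes above, and use Dedekind's factorization criterion at a few auxiliary split, unramified primes $\mathfrak q$ of $F$ to prescribe cycle types of Frobenius---say an $n$-cycle, an $(n-1)$-cycle and a transposition---that generate $S_n$, thereby guaranteeing $\Gal(\widetilde{L_{t_0}}/F)\cong S_n$ and bypassing any delicate appeal to Hilbert irreducibility. The splitting field $K=\widetilde{L_{t_0}}$ is then ramified over the single finite place $\mathfrak p$.

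The \textbf{main obstacle} is twofold and lies in simultaneously collapsing all ramification to one place. First, one must ensure that only \emph{one} prime of $F$ lies over the Schinzel prime and ramifies; this is exactly what the orbit-and-norm device buys, since prime norm of the discriminant ideal leaves no room for several ramified primes. Second, and more delicate, one must keep $K/F$ \emph{unramified at the archimedean places}: a real place $v$ of $F$ ramifies precisely when $g(x)-t_0$ acquires a non-real root under $v$, so I would need $g(x)-t_0$ to stay totally split over every real embedding of $F$. Since all roots can be real only for $t_0$ in a bounded band, reconciling this with the requirement that $M(t_0)$ be prime (for which Schinzel produces large $t_0$) is the crux; I would address it by scaling $g$ so as to widen the totally-real band until it provably contains a Schinzel prime value of $M$, exactly as in the case $F=\Q$ of \cite[Theorem 1.11]{BEF23}. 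For totally imaginary $F$ this second issue disappears, as there are no real places to protect.
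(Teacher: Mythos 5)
Your overall architecture is the same as the paper's (which in turn generalizes \cite[Theorem 1.11]{BEF23}): force the norm $N_{F/\Q}(\disc)$ to be, up to normalization, a single $\Q$-irreducible polynomial in the Schinzel parameter, apply Hypothesis H to make its value a rational prime so that the discriminant generates a prime ideal of $\mathcal O_F$ and only one finite place can ramify, and kill archimedean ramification by total reality. Your substitutions --- a Morse family $g(x)-t$ in place of the paper's tailored family, and Dedekind/Frobenius congruence conditions at auxiliary split primes in place of the paper's quantitative Hilbert irreducibility (its Proposition \ref{prop: hit}) --- are legitimate and compatible with H. But the step you yourself call the crux is a genuine gap as you propose to close it. For a fixed polynomial of degree $n\ge 3$, under any real embedding of $F$ the set of $t_0$ for which $g(x)-t_0$ is totally real is a \emph{bounded} interval cut out by the critical values; outside it, $g(x)=t_0$ has at most two real roots. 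Pre-scaling $g$ merely replaces this band by another fixed bounded band, and Hypothesis H is purely qualitative: it yields infinitely many $t_0$ with $M(t_0)$ prime but gives no control on their location, so nothing lets you ``provably'' capture a prime value inside any prescribed bounded interval --- that would be a primes-in-short-intervals strengthening of H. Hence, whenever $F$ has a real place, your scheme as stated fails for every $n\ge 3$ (your remark that the problem vanishes for totally imaginary $F$ is correct).

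The actual fix, and the design point of the paper's construction, is to let the Schinzel parameter scale the roots rather than only translate the polynomial: the paper takes $f(X;\mathbf A,T)$ with coefficients $c_i(T^i+A_i)$, where the $c_i$ are the coefficients of $\prod_{i=1}^n(X-i)$, so that $f(tX;\mathbf a,t)^{\sigma_j}/t^n$ tends to $\prod_{i=1}^n(X-i)$ as $t\to\infty$ for every real embedding $\sigma_j$; thus \emph{all} sufficiently large specializations are automatically totally real, i.e.\ the totally-real region is a neighborhood of infinity, which is exactly what H can reach. Two further steps of yours are under-justified, though fixable. First, your assertion that a ``generic'' $g$ makes the conjugates $\{\sigma c_i\}$ a single $\Gal(\Qb/\Q)$-orbit is equivalent to a critical value generating a field containing $F$, and this needs an argument: the paper engineers it with the extra term $P\alpha A_n$ (with $\alpha$ a well-chosen primitive element), which makes the $d$ conjugate discriminant polynomials $D_j$ distinct and non-associate, and then transitivity of the Galois action gives irreducibility of the norm $H$. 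Second, ``no fixed prime divisor'' is not a mere normalization: primes $p\le\deg M$ require a separate argument, which the paper supplies via the auxiliary factor $P$ and the congruence condition (iii) at primes $p\mid P$, together with the degree bound $p>dn(n-1)\ge\deg_T H(\mathbf a,T)$ for the remaining primes. Unlike these two repairable points, the archimedean step requires redesigning the family itself.
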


Theorem \ref{thm: main} will be proved in \S\ref{sec: proof main} and Theorem \ref{cor: main} (as well as Proposition \ref{prop: schinzel}) will be deduced in \S \ref{sec: number fields}. The main ingredient in the proof of Theorem \ref{thm: main}, as well as Theorems \ref{thm: ff}, \ref{thm: not just hilbertian} below, is a recent result by the author and Tsang \cite{EnTs24_1} showing that every finite group is a normalizer quotient of arbitrarily large symmetric and alternating groups. See Proposition \ref{prop: group theory main} below for the precise statement.

\begin{rem} In the statements of Theorem \ref{thm: main}, Theorem \ref{cor: main} and Theorem \ref{thm: ff} below we may assert the existence of infinitely many non-isomorphic extensions $K/F$ with the required properties. This can be shown by augmenting the proofs with the argument used in the proof of Theorem \ref{thm: not just hilbertian} below to obtain infinitely many non-isomorphic extensions instead of just one. We omit this (straightforward) refinement to simplify the exposition.\end{rem}

\begin{rem} Our method can be used to handle other ramification conditions, as long as the corresponding result is available for infinitely many symmetric or alternating groups. For example, it was conjectured by Roberts and Venkatesh (\cite[Conjecture 1.1]{RoVe15} in the special case of the simple group $A_5$) that for infinitely many $n$ there exists a Galois extensions $K/\Q$ ramified only over $2,3,5,\infty$ with Galois group $S_n$ or $A_n$. Assuming this conjecture, one can produce (using the method of proof of Theorem \ref{thm: main}) finite extensions $K/\Q$ with $\Aut(K)=G$ and ramified only over $2,3,5,\infty$ for an arbitrary finite group $G$.\end{rem}

We also establish a strong function field analogue of Conjecture \ref{conj: main}.

\begin{thm}\label{thm: ff} Let $F$ be a global function field, $G$ a finite group and $P$ an arbitrary place of $F$. There exists a finite geometric extension $K/F$ such that $\Aut(K/F)\cong G$ and $K/F$ is unramified outside of $P$.\end{thm}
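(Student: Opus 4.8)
The plan is to combine the Entin--Tsang normalizer-quotient theorem (Proposition \ref{prop: group theory main}) with a minimal-ramification realization of an alternating group, using the fixed-field construction that also underlies Theorem \ref{thm: main}. Write $p$ for the characteristic of $F$ and $\F_q$ for its field of constants. By the alternating-group case of Proposition \ref{prop: group theory main} there are arbitrarily large $n$ and subgroups $H\leqslant A_n$ with $N_{A_n}(H)/H\cong G$. I would select such an $n$ subject to two further, cofinitely-many-$n$ constraints specified below, construct a geometric Galois extension $L/F$ with $\Gal(L/F)\cong A_n$ unramified outside $P$, and then set $K=L^H$. For any Galois $L/F$ with group $\Gamma$ and any $H\leqslant\Gamma$ one has $\Aut(L^H/F)\cong N_\Gamma(H)/H$; hence $\Aut(K/F)\cong N_{A_n}(H)/H\cong G$, and $[K:F]=[A_n:H]<\infty$. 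Moreover $\Ram(K/F)\subseteq\Ram(L/F)\subseteq\{P\}$, and since $K\subseteq L$ the field of constants of $K$ equals that of $L$, so $K/F$ is geometric as soon as $L/F$ is. Thus everything reduces to the realization of $A_n$.

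For the realization I would first reduce to the rational function field. By Riemann--Roch there is a nonconstant $t\in F$ whose pole divisor is supported only at $P$; then $F/\F_q(t)$ is finite and $P$ is the unique place of $F$ above the place $\infty$ of $\F_q(t)$. The base case needed is a geometric Galois extension $M/\F_q(t)$ with $\Gal(M/\F_q(t))\cong A_n$ unramified outside $\infty$. Since $A_n$ is quasi-$p$ for every prime $p$ whenever $n\geq\max(5,p)$ (it is generated by $p$-cycles when $p$ is odd, and by double transpositions when $p=2$), such an extension exists over $\overline{\F}_q(t)$ by the positive solution of Abhyankar's conjecture for the affine line due to Raynaud, and descends to a geometric extension of $\F_q(t)$ with full group $A_n$; this is the function-field minimal ramification statement for alternating groups, closely related to the results of \cite{BEF23} (explicit models defined over $\F_p(t)$ and base-changed to $\F_q$ are also available from Abhyankar's work). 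I therefore impose the first constraint $n\geq\max(5,p)$, so that this input applies.

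Now set $L=M\cdot F\subseteq\overline{F}$. Since $M/\F_q(t)$ is unramified at every finite place, unramifiedness is preserved under the base change to $F$, so $L/F$ is unramified at every place of $F$ not lying over $\infty$, i.e. outside $P$. It remains to check that $\Gal(L/F)\cong A_n$, for which I impose the second constraint $n>[F:\F_q(t)]$. Indeed $\Gal(L/F)\cong\Gal(M/(M\cap F))$, and $M\cap F=M^U$ for $U=\Gal(M/(M\cap F))\leqslant A_n$ with $[A_n:U]=[M\cap F:\F_q(t)]\leqslant[F:\F_q(t)]<n$; since every proper subgroup of $A_n$ $(n\geq 5)$ has index at least $n$, this forces $U=A_n$, whence $M\cap F=\F_q(t)$ and $\Gal(L/F)\cong A_n$. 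Finally $L/F$ is geometric: a constant-field extension inside $L/F$ would be an abelian quotient of the simple nonabelian group $A_n$, hence trivial, so the field of constants of $L$ is $\F_q$. Choosing $n$ in the infinite set furnished by Proposition \ref{prop: group theory main} that also satisfies the two cofinite constraints $n\geq\max(5,p)$ and $n>[F:\F_q(t)]$ completes the construction.

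The main obstacle is the base case over $\F_q(t)$, and specifically the passage from the geometric statement (over $\overline{\F}_q$) to the full arithmetic group $A_n$ over the prescribed finite field $\F_q$: Raynaud's theorem is geometric, and the descent keeping the Galois group equal to $A_n$ — equivalently, ensuring no growth of the field of constants in the base case — is the delicate arithmetic point, and it is here that one invokes \cite{BEF23} or explicit descended models. The remaining ingredients — the Riemann--Roch choice of $t$, the preservation of unramifiedness under base change, and the two index/simplicity arguments that force $\Gal(L/F)\cong A_n$ and the geometricity of $L/F$ — are routine once $n$ is taken large relative to $[F:\F_q(t)]$.
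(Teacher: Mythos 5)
Your architecture is exactly that of the paper: choose $t\in F$ with pole only at $P$ via Riemann--Roch, realize $A_n$ over $\F_q(t)$ ramified only over $\infty$, pass to the compositum $L=MF$, note $L/F$ is geometric and unramified outside $P$, and finish with Proposition \ref{prop: group theory main} and the fixed-field construction of Proposition \ref{prop: galois theory}. Your one genuine departure is how you force $M\cap F=\F_q(t)$: you take $n>[F:\F_q(t)]$ and use that every proper subgroup of $A_n$ ($n\ge 5$) has index at least $n$, whereas the paper uses that the extensions $M_n/\F_q(t)$ are pairwise linearly disjoint (the $A_n$ being simple and pairwise non-isomorphic) and picks one disjoint from $F$. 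Your index argument is correct and arguably cleaner, since it gives an explicit quantitative constraint rather than a pigeonhole over an infinite family.

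The weak point is your justification of the base case, and you have correctly identified it yourself. The assertion that the $A_n$-cover furnished by Raynaud's theorem over $\overline{\F}_q(t)$ ``descends to a geometric extension of $\F_q(t)$ with full group $A_n$'' is not a valid deduction: Raynaud's result is purely geometric, and descent to a finite field of constants while keeping the arithmetic Galois group equal to $A_n$ (equivalently, regularity of the model) is precisely the hard arithmetic content, not a formal consequence. Moreover \cite{BEF23} is not where this statement lives. The correct inputs, and the ones the paper uses, are: for $p>2$, the theorem of Entin--Pirani \cite[Theorem 1]{EnPi24}, giving a geometric Galois $A_n$-extension of $\F_q(T)$ ramified only over $T=\infty$ for every $n\ge p$; and for $p=2$, Abhyankar's explicit trinomial $X^n+TX^{n-4}+1$, which by \cite[Theorem 2]{Abh93} and \cite[Equation (5.27)]{AOS94} has Galois group $A_n$ over $\F_q(T)$, unramified outside $\infty$, for $9\le n\equiv 1\pmod 8$. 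Note that for $p=2$ the arithmetic statement is known only in such congruence classes, not for all $n\ge\max(5,p)$ as your quasi-$p$ heuristic suggests --- but this is harmless, since Proposition \ref{prop: group theory main} holds for all sufficiently large $n$ and you only need infinitely many $n$ meeting your two cofinite constraints. With the Raynaud-plus-descent claim replaced by these citations (your own parenthetical fallback to explicit Abhyankar models, suitably sourced), your proof is complete and coincides with the paper's.
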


 The proof will be given in \S\ref{sec: function fields}.

\begin{rem} The extension $K/F$ is called geometric if $K$ and $F$ have the same fields of constants. This assumption appears in the function field analogue of the MRP studied in \cite{BEF23}, where it is imposed to make the behavior of the problem more in line with the case of $\Q$. 
\end{rem}

\subsection{Everywhere unramified extensions}
\label{sec: unramified}

One may ask under what conditions Conjecture \ref{conj: main} (resp. Theorem \ref{thm: ff}) can be strengthened to require $K/F$ to be everywhere unramified. This clearly cannot happen if $F=\Q$ if $G\neq 1$, because $\Q$ has no nontrivial unramified extensions. But it can happen for other number fields, with the answer depending on $G$ for some of them. This is clearly related to the question of whether $F$ has an infinite unramified extension, which is known to be difficult (see \cite{Mai00, Bri10, Kim16, Won11_} for a sample of special cases). 

In the function field case one may expect a more uniform answer. If $F$ has genus 0, it has no nontrivial geometric unramified extensions. If $F$ has genus 1 then all of its geometric unramified extensions come from isogenies, are all abelian, and the possible Galois groups are precisely the subgroups of the reduced class group $\mathrm{Cl}_0(F)$ (this follows from class theory or the theory of elliptic curves). In the case of genus $\ge 2$ we may pose the following

\begin{question} Let $F$ be a global function field of genus $g\ge 2$. Is it necessarily true that $r_F(A_n)=0$ for infinitely many $n$?\end{question}

If the answer is positive, by Theorem \ref{thm: main} there exists a finite unramified $K/F$ with $\Aut(K/F)\cong G$, for any finite group $G$. If we consider a univariate function field $F$ with an algebraically closed field of constants (instead of a finite field), a general result of Stevenson \cite[Corollary 3.5]{Ste96} guarantees a positive answer to the above question. The case of a finite field of constants appears to be open. We note that an unramified $S_n$-extension cannot exist if the reduced class group $\mathrm{Cl}_0(F)$ has odd order (this follows from class field theory).

\subsection{Extensions without restriction on ramification}

Legrand and Paran \cite[Theorem 1.1]{LePa18} have shown that for any Hilbertian field $F$ and finite group $G$, there exist infinitely many non-isomorphic finite extensions $K/F$ such that $\Aut(K/F)\cong G$. For the definition and basic properties of Hilbertian fields see \cite[\S 12]{FrJa08}. This result was extended to a broader class of fields by Deschamps and Legrand \cite[Th\'eor\`eme 4.1]{DeLe21}. We give a new and simpler proof of these results and extend them to a yet broader class of fields.

\begin{thm}\label{thm: not just hilbertian} Let $F$ be a field that has a Galois $S_n$-extension or a Galois $A_n$-extension for infinitely many $n$. Then for any finite group $G$ there exist infinitely many non-isomorphic finite extensions $K/F$ with $\Aut(K/F)\cong G$.\end{thm}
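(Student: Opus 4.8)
The plan is to combine the Galois correspondence with the normalizer-quotient theorem of Entin--Tsang (Proposition \ref{prop: group theory main}). Recall the standard fact that if $L/F$ is a finite Galois extension with $\Gamma=\Gal(L/F)$ and $H\leqslant\Gamma$ is any subgroup, then the fixed field $K=L^H$ satisfies $\Aut(K/F)\cong N_\Gamma(H)/H$: indeed, for $g\in N_\Gamma(H)$ one has $g(K)=L^{gHg^{-1}}=L^H=K$, so $g|_K\in\Aut(K/F)$, and one checks that $g\mapsto g|_K$ induces an isomorphism $N_\Gamma(H)/H\isom\Aut(K/F)$ (surjectivity uses that $L/F$ is normal, so every $F$-embedding $K\hookrightarrow\overline F$ is the restriction of some element of $\Gamma$ and lands in $K$ exactly when that element normalizes $H$). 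Thus, to realize a prescribed $G$ it suffices to produce a Galois extension with group $S_n$ (or $A_n$) together with a subgroup whose normalizer quotient is $G$.

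By hypothesis, at least one of the two sets $\{n: F \text{ has a Galois } S_n\text{-extension}\}$ and $\{n : F \text{ has a Galois } A_n\text{-extension}\}$ is infinite; assume the former (the latter is entirely analogous, using $A_n$ throughout). By Proposition \ref{prop: group theory main}, for all sufficiently large $n$ there is a subgroup $H_n\leqslant S_n$ with $N_{S_n}(H_n)/H_n\cong G$. Hence there are infinitely many $n$ for which $F$ admits a Galois extension $L_n/F$ with $\Gal(L_n/F)\cong S_n$ together with such an $H_n$; for each of these $n$, set $K_n=L_n^{H_n}$, so that $\Aut(K_n/F)\cong G$ by the first paragraph. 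This already produces the required extension.

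It remains to extract infinitely many pairwise non-isomorphic $K_n$. Since an $F$-isomorphism preserves degree, it suffices to show that the degrees $[K_n:F]=[S_n:H_n]$ take infinitely many values. Here I would invoke the classical fact that for $n\geq 5$ the only subgroups of $S_n$ of index $<n$ are $S_n$ and $A_n$. If $G$ is isomorphic to neither $1$ nor $\Z/2$, then $H_n\notin\{S_n,A_n\}$ (these have normalizer quotients $1$ and $\Z/2$), whence $[S_n:H_n]\geq n\to\infty$. For the two remaining groups I would bypass Proposition \ref{prop: group theory main} and choose $H_n$ explicitly of unbounded index: for $G=1$ take $H_n$ a point stabilizer $\cong S_{n-1}$, which is self-normalizing, giving $[K_n:F]=n$; for $G\cong\Z/2$ take $H_n$ the pointwise stabilizer of two points, $H_n\cong S_{n-2}$, whose normalizer is $S_2\times S_{n-2}$, so that $N_{S_n}(H_n)/H_n\cong\Z/2$ and $[K_n:F]=n(n-1)$. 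In every case the degrees are unbounded, so infinitely many of the $K_n$ are pairwise non-isomorphic over $F$. (In the $A_n$ branch the bookkeeping is even simpler, since $A_n$ has no subgroup of index $2$, so only $G\cong 1$ needs separate treatment.)

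The main obstacle I anticipate is the coordination between the two infinite sets of indices $n$: the $n$ for which $F$ has an $S_n$- (or $A_n$-) extension, and the $n$ for which the normalizer-quotient realization of $G$ is available. This is precisely why one needs the strong form of the Entin--Tsang theorem realizing $G$ for \emph{all} sufficiently large $n$ (Proposition \ref{prop: group theory main}), rather than merely for a single $n$: only then is the intersection with the infinite hypothesis set guaranteed to be infinite. Everything else is either the standard Galois correspondence or elementary subgroup-index bookkeeping in $S_n$ and $A_n$.
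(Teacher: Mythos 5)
Your proposal is correct, and while its skeleton coincides with the paper's --- $K_n=L_n^{H_n}$ with $H_n$ supplied by Proposition \ref{prop: group theory main}, and $\Aut(K_n/F)\cong N_{\Gamma_n}(H_n)/H_n\cong G$ via the Galois-correspondence lemma (Proposition \ref{prop: galois theory}), including the correct observation that one needs the ``all sufficiently large $n$'' form of the normalizer-quotient theorem to intersect with the hypothesis set --- your mechanism for extracting infinitely many non-isomorphic $K_n$ is genuinely different from the paper's. The paper argues via linear disjointness: in the $A_n$ branch the fields $L_n$ are pairwise linearly disjoint because non-isomorphic simple groups have no common quotient, and in the $S_n$ branch it runs a case analysis on whether infinitely many $L_n$ share a common quadratic subextension $E/F$ (the only possibility, since $C_2$ is the sole proper nontrivial quotient of $S_n$ for $n\ge 5$), with the degenerate case $K_n=E$ for all $n$ forcing $|G|=2$, which is then handled separately via the explicit subgroup $H_n=A_{n-1}\leqslant S_n$. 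You instead note that $F$-isomorphic extensions have equal degree and force $[K_n:F]=[\Gamma_n:H_n]\to\infty$ using the classical fact that for $n\ge 5$ every subgroup of $S_n$ other than $S_n$ and $A_n$ has index at least $n$ (and every proper subgroup of $A_n$ has index at least $n$); since $N_{S_n}(S_n)/S_n=1$ and $N_{S_n}(A_n)/A_n\cong C_2$, the excluded subgroups arise only for $G\cong 1$ or $C_2$, which you patch with explicit choices whose normalizers you compute correctly ($S_{n-1}$ is self-normalizing, and $N_{S_n}(S_{n-2})=S_2\times S_{n-2}$ for the pointwise two-point stabilizer, both valid for $n\ge 5$). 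Your degree argument is more elementary and avoids the paper's delicate shared-quadratic-subfield analysis, at the cost of overriding Proposition \ref{prop: group theory main} for the two small groups --- a special-casing the paper must also perform for $|G|=2$; what the paper's route buys in exchange is the stronger conclusion, visible in its proof, that in most branches the extensions are pairwise linearly disjoint over $F$, not merely non-isomorphic. The only loose end in your write-up is that the $G=1$ case of the $A_n$ branch is left unstated, but the point stabilizer $A_{n-1}\leqslant A_n$ is self-normalizing for $n\ge 5$ (maximal and non-normal), so the omission is harmless.
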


Theorem \ref{thm: not just hilbertian} will be proved in \S\ref{sec: proof main}.
The condition on $F$ in Theorem \ref{thm: not just hilbertian} is weaker than the assumptions in \cite[Th\'eor\`eme 4.1]{DeLe21}, which we do not repeat here in detail. We only mention that \cite[Th\'eor\`eme 4.1]{DeLe21} requires that $F$ has Galois $A_n$-extensions for infinitely many $n$, in addition to other restrictions. It is easy to construct fields satisfying the assumption of Theorem \ref{thm: not just hilbertian} but not this stronger condition. For example take $F$ to be the compositum of all Galois extensions $L/\Q$ such that $\Gal(L/\Q)$ has some $A_n$ as a quotient (then $F$ has Galois $S_n$-extensions for all $n$, but no Galois $A_n$-extensions). Finally we note that every Hilbertian field clearly satisfies the assumption of Theorem \ref{thm: not just hilbertian} because it is an elementary fact that a Hilbertian field has a Galois $S_n$-extension for every $n$ (just specialize the coefficients of the generic polynomial $x^n+a_{n-1}x^{n-1}+\ldots+a_0$ which has Galois group $S_n$), which gives another proof of \cite[Theorem 1.1]{LePa18}.
\\ \\
{\bf Acknowledgments.} The author would like to thank Elad Paran for encouraging the strengthening of Theorem \ref{cor: main}(iii) from a previous weaker version to its present form. The author would also like to thank Tammy Ziegler for an illuminating conversation pertaining to the possibility of sharpening Theorem \ref{cor: main}(i). The author was partially supported by Israel Science Foundation grant no. 2507/19.

\section{Proof of Theorems \ref{thm: main} and \ref{thm: not just hilbertian}}
\label{sec: proof main}

Let $F$ be a field, $G$ a finite group. To construct a finite extension $K/F$ with $\Aut(K/F)\cong G$, one begins with a finite Galois extension $L/F$ with $\Gal(L/F)=\Gamma$, where $\Gamma$ is an auxiliary group such that there exists a subgroup $H\leqslant \Gamma$ with $N_\Gamma(H)/H\cong G$ ($N_\Gamma(H)$ is the normalizer of $H$ in $\Gamma$). Then the fixed field $K=L^H$ is the requisite extension, by the following standard

\begin{prop}\label{prop: galois theory} Let $L/F$ be a finite Galois extension with $\Gal(L/F)=\Gamma$ and assume that there exists a subgroup $H\leqslant \Gamma$ such that $N_\Gamma(H)/H\cong G$. Then the fixed field $K=L^H$ satisfies $\Aut(K/L)\cong G$.\end{prop}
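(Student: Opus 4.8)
The plan is to prove the stronger and more natural identity $\Aut(K/F)\cong N_\Gamma(H)/H$, from which the assertion follows immediately since $N_\Gamma(H)/H\cong G$ by hypothesis. (I read the conclusion as $\Aut(K/F)\cong G$, as $K=L^H$ is an intermediate field of the extension $L/F$.) The whole argument is an application of the fundamental theorem of Galois theory for $L/F$, together with the fact that an $F$-embedding of a subfield into a normal extension extends to an automorphism of that extension. I would begin by constructing a restriction homomorphism: writing $N:=N_\Gamma(H)$, for $\sigma\in N$ one has $\sigma H\sigma^{-1}=H$, hence $\sigma(K)=\sigma(L^H)=L^{\sigma H\sigma^{-1}}=L^H=K$, so $\sigma$ restricts to an $F$-automorphism of $K$. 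Thus $\sigma\mapsto\sigma|_K$ defines a homomorphism $\phi\colon N\to\Aut(K/F)$.

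Next I would compute the kernel. By the Galois correspondence for $L/F$ we have $\Gal(L/K)=H$, so $\sigma|_K=\mathrm{id}_K$ holds precisely when $\sigma$ fixes $K$ pointwise, i.e. when $\sigma\in H$. Since $H\leqslant N$, this gives $\ker\phi=H$, and hence $\phi$ induces an injection $N/H\hookrightarrow\Aut(K/F)$.

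The main point, and the step where normality of $L/F$ is essential, is the surjectivity of $\phi$. Given $\tau\in\Aut(K/F)$, I would view $\tau$ as an $F$-embedding $K\hookrightarrow L$; since $L/F$ is normal, $\tau$ extends to some $\sigma\in\Gal(L/F)=\Gamma$ with $\sigma|_K=\tau$. Because $\tau$ maps $K$ onto $K$, we get $\sigma(L^H)=L^H$, that is $L^{\sigma H\sigma^{-1}}=L^H$, and the injectivity of the Galois correspondence forces $\sigma H\sigma^{-1}=H$, so $\sigma\in N$ and $\phi(\sigma)=\tau$. Combining this with the kernel computation, $\phi$ descends to an isomorphism $N/H\isom\Aut(K/F)$, completing the proof. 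I expect the only genuine subtlety to be the extension of $\tau$ to an element of $\Gamma$, which is exactly where the finiteness, separability and normality of $L/F$ enter; the remaining manipulations are bookkeeping with the Galois correspondence and the identity $\sigma(L^H)=L^{\sigma H\sigma^{-1}}$.
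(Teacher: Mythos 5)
Your proof is correct and is precisely the standard argument that the paper leaves implicit with the remark ``this is a simple exercise in Galois theory'': restriction gives a homomorphism $N_\Gamma(H)\to\Aut(K/F)$ with kernel $H$ via $\Gal(L/K)=H$, and surjectivity follows from extending any $F$-automorphism of $K$ to an element of $\Gamma$ by normality of $L/F$, then invoking injectivity of the Galois correspondence to land in $N_\Gamma(H)$. You also correctly identified that the statement's $\Aut(K/L)$ is a typo for $\Aut(K/F)$.
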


\begin{proof} This is a simple exercise in Galois theory.\end{proof}

If $F$ is a global field then $|\Ram(K/F)|\subset|\Ram(L/F)|$, so if one can control the ramification of $L/F$ then one also controls the ramification of $K/F$. In \cite{Fri80} the group $\Gamma$ is chosen as follows: first one embeds $G\leqslant S_n$ for some $n$ and then forms the permutational wreath product $A=S_N\wr_{S_n/G}S_n$ with some $N\ge 3$ (w.r.t. the natural action of $S_n$ on the coset space $S_n/G$). The stabilizer $H$ of all blocks and a point in the unit block (the block corresponding to the unit coset) satisfies $N_\Gamma(H)/H\cong G$. In \cite{Gey83} and \cite{LePa18} the special case $N=3$ is used\footnote{In all the cited sources the group $\Gamma$ is not described explicitly, but it is implicit as the Galois group of the largest extension considered.}. In \cite{DeLe21} $\Gamma$ is chosen to be an extension of $A_n$ by $S^N$, where $S$ is a finite non-abelian simple group. Unfortunately not much is known about the minimal ramification problem for such wreath products and extensions. Instead, we will work with $\Gamma=S_n$ and $\Gamma=A_n$. The main group-theoretic result we will need, recently obtained by the author jointly with C. Tsang, is the following

\begin{prop}\label{prop: group theory main} Let $G$ be a finite group. For all sufficiently large $n\ge n_0(G)$ and $\Gamma\in\{S_n,A_n\}$, there exists a subgroup $H\leqslant \Gamma$ such that $N_\Gamma(H)/H\cong G$.\end{prop}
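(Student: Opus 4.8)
The plan is to reduce the statement to a purely group-theoretic realization problem by means of the classical description of the normalizer of a transitive permutation group. Suppose $H$ acts faithfully and transitively on $\{1,\dots,n\}$ with core-free point stabilizer $L=H_\omega$, so that the $H$-set is $H/L$. Conjugation defines a homomorphism $N_{S_n}(H)\to\Aut(H)$ whose kernel is the centralizer $C_{S_n}(H)\cong N_H(L)/L$ and whose image is the subgroup $\Aut(H;L)\leqslant\Aut(H)$ of automorphisms fixing the $H$-conjugacy class of $L$ (these are exactly the $\phi$ for which the twisted $H$-set $(H/L)^\phi$ is isomorphic to $H/L$). If I arrange $Z(H)=1$ and $N_H(L)=L$, then $C_{S_n}(H)=1$, the copy of $H$ inside $N_{S_n}(H)$ maps isomorphically onto $\mathrm{Inn}(H)$, and
\[ N_{S_n}(H)/H \cong \Aut(H;L)/\mathrm{Inn}(H). \]
In particular, whenever the conjugacy class of $L$ is stable under \emph{all} of $\Aut(H)$, i.e. $\Aut(H;L)=\Aut(H)$, one obtains $N_{S_n}(H)/H\cong\Out(H)$. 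Thus it suffices to produce, for each prescribed degree, a finite centerless group $H$ with a core-free, self-normalizing subgroup $L$ whose conjugacy class is $\Aut(H)$-invariant and with $\Out(H)\cong G$.

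I would establish the required realization by combining a single ``seed'' construction with a flexible family of \emph{complete} auxiliary factors. First, I would fix one finite centerless group $H_0$ with a core-free self-normalizing subgroup $L_0$ of $\Aut(H_0)$-invariant class and $\Out(H_0)\cong G$; producing such an $H_0$ is the heart of the matter. Then, to generate an infinite family, I would pass to direct products $H=H_0\times S_m$ with the symmetric group $S_m$ ($m\ge 3$, $m\neq 6$), which is complete: $Z(S_m)=1$, $\Aut(S_m)=\mathrm{Inn}(S_m)$ and $\Out(S_m)=1$. Choosing $m$ large enough that $H_0$ and $S_m$ share no nonabelian composition factor gives $\Aut(H_0\times S_m)\cong\Aut(H_0)\times\Aut(S_m)$, hence $\Out(H_0\times S_m)\cong\Out(H_0)\cong G$, and $H_0\times S_m$ remains centerless. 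Taking $L=L_0\times T$ for a self-normalizing Young subgroup $T\leqslant S_m$ (e.g. a point stabilizer $S_{m-1}$, or more generally $S_a\times S_{m-a}$ with $a\neq m-a$) yields a core-free self-normalizing subgroup of $\Aut$-invariant class, with $N/H\cong G$ in degree $[H_0{:}L_0]\cdot[S_m{:}T]$.

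It remains to (i) hit \emph{every} sufficiently large $n$ and (ii) descend to $A_n$. For (i), I would exploit the freedom in both the auxiliary index $[S_m{:}T]$ (which ranges over indices of Young subgroups, e.g. all large integers via point stabilizers, and binomial coefficients $\binom{m}{a}$ otherwise) and the choice of the complete factor, so that a Sylvester--Frobenius argument on the attainable degrees fills in all $n\ge n_0(G)$. For (ii), note that $H$ can be taken inside $A_n$, and the only obstruction is that some element of $N_{S_n}(H)$ realizing a generator of $G$ might be odd; since oddness is a single $\Z/2$ condition and the padding blocks $S_m$ (with $T$) provide abundant even normalizing elements, I would adjust the auxiliary data so that the entire normalizer lands in $A_n$, giving $N_{A_n}(H)=N_{S_n}(H)$ and the same quotient $G$. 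The main obstacle is the seed construction: realizing an \emph{arbitrary} finite $G$ as $\Out(H_0)$ for a finite centerless $H_0$ equipped simultaneously with a core-free, self-normalizing, $\Aut$-class-invariant subgroup $L_0$. The existence of such an $H_0$ (rather than merely $\Out(H_0)\cong G$) is precisely where the delicate group theory of \cite{EnTs24_1} enters, and it is the step I expect to require the most work.
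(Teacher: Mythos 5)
The paper does not reprove this proposition at all: its ``proof'' is a one-line citation of \cite[Corollary 1.2]{EnTs24_1}, whose own argument runs through the inner holomorph of a centerless indecomposable group $Q$ with $\Out(Q)\cong G$ (using the Cornulier--Sambale realization result) rather than through your product-padding scheme. Your opening reduction is correct and standard: for a transitive $H\leqslant S_n$ with point stabilizer $L$, the conjugation map $N_{S_n}(H)\to\Aut(H)$ has kernel $C_{S_n}(H)\cong N_H(L)/L$ and image $\Aut(H;[L])$, so core-freeness and $N_H(L)=L$ (which already force $Z(H)=1$) give $N_{S_n}(H)/H\cong\Aut(H;[L])/\mathrm{Inn}(H)$, and $\Aut$-invariance of $[L]$ gives $\Out(H)$. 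But the entire content of the statement is the existence of the seed: a centerless $H_0$ with $\Out(H_0)\cong G$ \emph{together with} a core-free, self-normalizing $L_0$ of $\Aut(H_0)$-invariant class. You explicitly defer this to ``the delicate group theory of \cite{EnTs24_1}'' --- i.e.\ to the very result being proved. As a blind proof this is circular: you have reformulated the proposition, not established it.

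Even granting the seed, two steps of your sketch fail as written. First, the degree-filling: your padding $H=H_0\times S_m$, $L=L_0\times T$ produces transitive degree $[H_0:L_0]\cdot[S_m:T]$, so every degree you can reach is a \emph{multiple} of the seed degree $n_0=[H_0:L_0]$. A Sylvester--Frobenius argument applies to additively generated numerical semigroups; no amount of multiplicative padding yields degrees coprime to $n_0$, so ``all sufficiently large $n$'' is out of reach by this route. The natural repair is additive, intransitive padding --- e.g.\ adjoin a second orbit of size $k$ on which $H$ acts as the full symmetric group $S_k$, with orbit sizes distinct and no shared direct factors, so that $N_{S_n}(H)$ factors over the orbits and the quotient is unchanged while the degree becomes $n_0+k$ --- but this requires an analysis of normalizers of intransitive groups (orbit preservation, non-mixing of direct factors) that your sketch omits entirely. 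Second, the $A_n$ case: the claim is about $N_{A_n}(H)/H$, and in general $N_{A_n}(H)=N_{S_n}(H)\cap A_n$ has index $1$ or $2$ in $N_{S_n}(H)$, so the quotient can be $G$, an index-$2$ subgroup, or fail to contain an isomorphic copy of $H$'s complement structure; your remark that one can ``adjust the auxiliary data so that the entire normalizer lands in $A_n$'' is precisely the nontrivial verification (one must place $H$ and representatives of all of $N_{S_n}(H)/H$ inside $A_n$ while ensuring no odd permutation normalizes $H$), and it is asserted rather than proved.
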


\begin{proof} This is \cite[Corollary 1.2]{EnTs24_1}.\end{proof}

The proof of Proposition \ref{prop: group theory main} uses a result of Cornulier and Sambale \cite{Sam24_1} on the realization of any finite group as the outer automorphism group of another finite group, as well as the determination of the normalizer of the inner holomorph of a finite centerless indecomposable group. See \cite{EnTs24_1} for further details.

\begin{proof}[Proof of Theorem \ref{thm: main}] 
    Assume that $F$ is a global field. Let $n_0(G)$ be as in Proposition \ref{prop: group theory main} and $$r=\underline\lim_{n\to\infty}\min(r_F(S_n),r_F(A_n)).$$ Pick $n\ge n_0(G)$ and $\Gamma\in\{S_n,A_n\}$ such that there exists a Galois extension $L/F$ with $\Gal(L/F)=\Gamma$ ramified over $r$ places of $F$ (this is possible by the definition of $r$). By Proposition \ref{prop: group theory main} there exists $H\leqslant \Gamma$ with $N_\Gamma(H)/H\cong G$. By Proposition \ref{prop: galois theory} the subextension $K=L^H$ satisfies $\Aut(K/F)\cong G$, and since $\Ram(K/F)\subset\Ram(L/F)$ it is ramified over at most $r$ places of $F$, as required.
\end{proof}

\begin{proof}[Proof of Theorem \ref{thm: not just hilbertian}]

 Let $F$ be a field as in Theorem \ref{thm: not just hilbertian}, $G$ a finite group. If $G=1$ the assertion is trivial and we will treat the case $|G|=2$ separately below. For now assume $|G|> 2$. By assumption there exists an infinite increasing sequence of natural numbers $n$ and Galois extensions $L_n/F$ with $\Gal(L_n/F)\cong\Gamma_n,\,\Gamma_n\in\{S_n,A_n\}$. By Proposition \ref{prop: group theory main} we may assume (looking only at $n\ge n_0(G)$) that there exist subgroups $H_n\leqslant \Gamma_n$ such that $N_{\Gamma_n}(H_n)/H_n\cong G$. Denoting $K_n=L_n^{H_n}$ we have (by Proposition \ref{prop: galois theory}) $\Aut(K_n/F)\cong G$. It remains to show that infinitely many of the $K_n$ are non-isomorphic extensions of $F$.

 If $\Gamma_n=A_n$ for infinitely many $n$, then $L_n/F$ are pairwise linearly disjoint for $n\ge 5$ (because the groups $A_n$ are have no common quotient) and therefore so are the $K_n/F$. A fortiori they are non-isomorphic.

 It remains (after removing finitely many $n$) to treat the case that $\Gamma_n=S_n$ for all $n$ in our sequence. In this case either infinitely many of the $L_n/F$ are pairwise linearly disjoint (and then we conclude as above), or we may assume (after removing finitely many $n$) that all the $L_n/F$ contain a common quadratic extension $E/F$, over which they are pairwise linearly disjoint (here we use the fact that the only nontrivial quotient of $S_n$ for $n\ge 5$ is $S_n/A_n\cong C_2$). In that case either infinitely many of the $K_n/F$ are non-isomorphic (in which case we are done), or we may assume (after removing finitely many $n$) that $K_n=E$ for all $n$, in which case $|G|=2$ contrary to assumption.

 It remains to treat the case $|G|=2$. If there is a Galois $A_n$-extension of $F$ for infinitely many $n$ the argument above works for this case as well. Otherwise there is a Galois $S_n$-extension $L_n/F$ for an infinite sequence of $n$. Set $H_n=A_{n-1}\leqslant S_n$ and $K_n=L_n^{H_n}$. It is easy to check that $N_{S_{n}}(H_n)/H_n$ is cyclic of order 2 for $n\ge 5$, so by Proposition \ref{prop: galois theory} we have $\Aut(K_n/F)\cong G$ and the extensions $K_n/F$ are non-isomorphic (since they are not contained in the unique quadratic subextensions $F\subset E_n\subset L_n$).

\end{proof}

\section{Number fields: proof of Theorem \ref{cor: main}}
\label{sec: number fields}

\begin{proof}[Proof of Theorem \ref{cor: main}] Recall that we denote $r(G)=r_\Q(G)$.\\
{\bf (i).} A result of Bary-Soroker and Schlank \cite[Theorem 1.5]{BaSc20} asserts that for any $n\ge 2$ there exist infinitely many pairwise linearly disjoint Galois extensions $L/\Q$ with $\Gal(L/\Q)\cong S_n$ ramified over 4 places of $\Q$ \footnote{The original statement asserts the existence of one such extension, but the method produces infinitely many linearly disjoint ones.}. One of them is linearly disjoint from $F/\Q$. For such an extension, $LF/F$ is Galois with $\Gal(LF/F)\cong\Gal(L/\Q)\cong S_n$ and $LF/F$ can only be ramified over the places of $F$ lying over $\Ram(L/\Q)$. The number of such places is $\le[F:\Q]|\Ram(L/\Q)|=4[F:\Q]$. Consequently $r_F(S_n)\le 4[F:\Q]$  for all $n$ and the assertion follows from Theorem \ref{thm: main}.\\
{\bf (ii).} Immediate from Theorem \ref{thm: main}.\\
{\bf (iii).} Immediate from Theorem \ref{thm: main} and Proposition \ref{prop: schinzel}, which will be proved in \S\ref{sec: schinzel} below.
\end{proof}

\begin{rem} It may be possible to improve the bound $4[F:\Q]$ in Theorem \ref{cor: main}(i) to $[F:\Q]+3$ by performing the construction of \cite[Theorem 1.5]{BaSc20} directly over $F$ (as opposed to doing it over $\Q$ and base-changing to $F$ as we did) and then applying Theorem \ref{thm: main}. This would require generalizing a result of Green-Tao \cite[Corollary 1.7]{GrTa10} to number fields, which should be doable but would require significant technical work. An alternative approach would be to strengthen  \cite[Corollary 1.7]{GrTa10} to apply to primes in prescribed Chebotarev classes (satisfying certain mild conditions). This may be achievable by combining the Green-Tao machinery with the results of Matthiesen \cite{Mat18}.
Either approach would require extensive work and we have not attempted to pursue the above improvement.
    
\end{rem}

\subsection{An alternative approach to Theorem \ref{cor: main}(i)}

Theorem \ref{cor: main}(i), as well as Theorem \ref{thm: ff}, can be established by an alternative method which does not use the group-theoretic result of Proposition \ref{prop: group theory main}, instead relying on a group-theoretic result of Frucht \cite{Fru39} which says that every finite group can be realized as the automorphism group of a finite simple graph. This is similar to the approach of Fried-Koll\'ar \cite{FrKo78}. Let us sketch the construction for proving Theorem \ref{cor: main}(i) in the case $F=\Q$. Let $G$ be a finite group and $\Delta=(V,E)$ a finite simple graph with $\Aut(\Delta)\cong G$. Set $n=|V|$ and assume $V=\{1,\ldots,n\}$. Let $p,q,r$ be primes such that
\begin{enumerate}
\item[(i)] $r=n^np+(n-1)^{n-1}q$.
\item[(ii)] $f=X^n-X^{n-1}-\frac pq$ satisfies $\Gal(f/\Q)=S_n$.
\end{enumerate}
It follows from \cite[Proof of Theorem 1.5]{BaSc20} that such triples exist with $\min(p,q,r)$ arbitrarily large. Let $\alpha_1,\ldots,\alpha_n\in\overline\Q$ be the roots of $f$ and for each edge $\{i,j\}\in E$ pick $\beta_{ij}\in\overline\Q$ such that $\beta_{ij}^r=(\alpha_i-\alpha_j)^2$. Set $K=\Q(\alpha_1,\ldots,\alpha_n,\beta_{ij}:\,\{i,j\}\in E)$. A simple calculation shows that $\Ram(K/\Q)=\{p,q,r,\infty\}$. One can show by arguments similar to \cite{FrKo78} that $\Aut(K/\Q)\cong\Aut(\Delta)\cong G$. Here a crucial fact is that $\alpha_i-\alpha_j,\,1\le i<j\le n$ are multiplicatively independent, which can be deduced from the fact that the inertia group of $\Q(\alpha_1,\ldots,\alpha_n)/\Q$ over $r$ is generated by a transposition. Variations of this construction can prove the full Theorem \ref{cor: main}(i) and Theorem \ref{thm: ff}, but we do not include the details here since we derive these results from Theorem \ref{thm: main} anyway.

\subsection{Proof of Proposition \ref{prop: schinzel}}
\label{sec: schinzel}

Assume Schinzel's Hypothesis H (henceforth referred to as Hypothesis H). Let $F$ be a fixed number field, $n\ge 2$ a natural number (the case $n=1$ is trivial). We will show that $r_F(S_n)\le 1$, establishing Proposition \ref{prop: schinzel}. In addition to Hypothesis H, a key tool in the proof is Hilbert's Irreducibility Theorem (HIT) in various forms. We will assume basic familiarity with the subject, as presented e.g. in \cite[\S 12-13]{FrJa08}. We will need the following variant of the HIT:

\begin{prop}\label{prop: hit}
Let $\mathcal H\subset\Q$ be a Hilbert subset in the sense of \cite[\S 12]{FrJa08}. There exists $\delta=\delta(\mathcal H)>0,\,k=k(\mathcal H)\in\N$ with the following property: for any $C>0$ there exists a squarefree $v\in\N,\,v>C$ composed of $\le k$ prime factors and a subset $U\subset(\Z/v\Z)^\times$ of size $|U|\ge\delta v$, such that $t\in\mathcal H$ for any $t\in\Z$ with $(t\bmod v)\in U$.\end{prop}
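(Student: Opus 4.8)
The goal is a quantitative, "positive-density on residue classes" version of the Hilbert Irreducibility Theorem, and the natural strategy is to reduce to the large-sieve form of the HIT combined with the structure of a Hilbert subset of $\Q$. Let me sketch the approach.

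\medskip

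First I would recall the definition: a Hilbert subset $\mathcal H\subset\Q$ is (by definition, after clearing denominators and restricting to integer specializations) a set of the form $\{t\in\Z:\,g(t)\neq 0,\ f_i(t,Y)\text{ irreducible over }\Q\text{ for }i=1,\dots,m\}$ for finitely many $f_i\in\Q[T,Y]$ irreducible in $\Q(T)[Y]$ and a nonzero $g\in\Q[T]$. The quantitative HIT (e.g. via the large sieve, as in \cite[\S 12-13]{FrJa08} or the sieve bounds of Cohen) tells us that the number of $t\in\{1,\dots,X\}$ with $t\notin\mathcal H$ is $O(X^{1/2}\log X)$ or similar, so $\mathcal H$ has density $1$ in $\Z$. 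The plan is to upgrade this density statement to one where the surviving residues lie in a single modulus $v$ with few prime factors.

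\medskip

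The key step is the following combinatorial/sieve-theoretic extraction. The large-sieve proof of HIT shows that membership $t\in\mathcal H$ is controlled by congruence conditions: for a positive density of primes $\ell$, the condition that $t$ avoid the "bad" residues mod $\ell$ (those where some $f_i(t,Y)$ factors, detected via the reduction mod $\ell$ and Chebotarev / the factorization type) already forces $t\in\mathcal H$ for all but a sparse exceptional set. Concretely, one shows there is a density $\delta_0>0$ and, for each prime $\ell$ in a suitable infinite set $\mathcal L$ of primes, a set of "good" residues $U_\ell\subset(\Z/\ell\Z)^\times$ with $|U_\ell|\ge\delta_0\ell$, such that if $t$ lands in $U_\ell$ for even a single $\ell\in\mathcal L$ then $t\in\mathcal H$. (Landing in the good residues mod $\ell$ forces the relevant polynomials to have the generic factorization type, which propagates to irreducibility over $\Q$.) Given $C>0$, I would then simply pick one prime $v=\ell\in\mathcal L$ with $\ell>C$ and set $U=U_\ell$; this has $k=1$ prime factor, is squarefree, and satisfies $|U|\ge\delta_0\ell=\delta v$ with $\delta=\delta_0$. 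The condition $(t\bmod v)\in U\Rightarrow t\in\mathcal H$ is exactly the required property.

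\medskip

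The main obstacle is establishing the single-prime "good residue forces membership" statement uniformly, i.e. producing the density $\delta_0$ and the prime set $\mathcal L$ with the guarantee that a good residue modulo \emph{one} prime already implies irreducibility of every $f_i(t,Y)$ over $\Q$ — not merely mod $\ell$. The subtlety is that irreducibility modulo $\ell$ implies irreducibility over $\Q$ (this direction is safe: a factorization over $\Q$ reduces to one mod $\ell$), so I would choose $U_\ell$ to be precisely the set of residues $t_0$ for which $\prod_i f_i(t_0,Y)\cdot g(t_0)$ stays nonzero and each $f_i(t_0,Y)\bmod\ell$ is irreducible in $\F_\ell[Y]$. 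By the Chebotarev density theorem / Lang--Weil applied to the splitting data of the $f_i$, the proportion of such $t_0\in\F_\ell$ converges as $\ell\to\infty$ to a positive constant (the density of elements acting irreducibly, i.e. as a full cycle, in the relevant monodromy group), giving a uniform $\delta_0>0$ for all large $\ell\in\mathcal L$. Thus the one genuinely nontrivial input is this Chebotarev/function-field count, and once it yields a positive limiting density the extraction to a single prime $v$ with $k=1$ is immediate. If the monodromy obstruction prevents a full cycle (so that irreducibility mod a single prime is impossible), I would instead allow $k>1$ and detect irreducibility over $\Q$ via the combined factorization data modulo several primes $\ell_1,\dots,\ell_k$ whose cycle types have no common proper divisor, setting $v=\ell_1\cdots\ell_k$ and $U$ the product of the corresponding good residue sets through the Chinese Remainder Theorem; this is why the statement permits a bounded number $k$ of prime factors rather than insisting on $k=1$.
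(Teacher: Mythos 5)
Your overall skeleton is the right one and is essentially what the paper does: the paper's proof is a one-line reduction to \cite[Theorem 13.3.5(a)]{FrJa08}, whose proof is exactly the scheme you describe --- for each of a bounded number $k$ of large primes, use Chebotarev in function fields (Lang--Weil) to find a positive-proportion set of residues $U_{\ell_j}\subset\F_{\ell_j}$, glue by CRT into $v=\ell_1\cdots\ell_k$ and $U$, and check $|U|\ge\delta v$. Your single-prime branch is also correct: when the monodromy group of $f_i$ over $\Q(T)$ contains an $n$-cycle, forcing Frobenius to be an $n$-cycle mod one large $\ell$ certifies irreducibility over $\Q$, with density bounded below uniformly in $\ell$.

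However, your fallback for the general case has a genuine gap. You propose to certify irreducibility over $\Q$ from ``combined factorization data modulo several primes whose cycle types have no common proper divisor,'' i.e.\ from factorization \emph{degrees} alone. This fails whenever no such system of cycle types exists in the monodromy group, and such groups occur: take $f(T,Y)=Y^4-4TY^2+4$, irreducible over $\Q(T)$ with Galois group $V_4$ acting regularly on the roots $\pm\sqrt{T+1}\pm\sqrt{T-1}$. Every element of $V_4$ has cycle type $(1^4)$ or $(2,2)$, so for every specialization $t$ and every prime $\ell$ the reduction factors into linear and quadratic pieces; the subsum $2$ is always available, so no amount of factorization-degree data at any set of primes can rule out a quadratic factor over $\Q$ --- yet $f(t,Y)$ is irreducible for almost all $t$. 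The correct repair, and what the proof of \cite[Theorem 13.3.5(a)]{FrJa08} actually does, is to work with Frobenius \emph{conjugacy classes} in the monodromy group $G$ (of the compositum of the splitting fields of all the $f_i$), not cycle types: choose classes $C_1,\ldots,C_k$ such that any subgroup of $G$ meeting all of them acts transitively on the roots of each $f_i$ (by Jordan's lemma one may take all conjugacy classes of $G$, forcing the specialized group to be all of $G$), and use function-field Chebotarev to produce, for each large $\ell_j$, a set of $\ge\delta_0\ell_j$ residues $t_0$ with Frobenius in $C_j$. In the $V_4$ example this distinguishes the three involutions, which all share the cycle type $(2,2)$; membership of the specialized group in two distinct involution classes forces it to be all of $V_4$, hence transitive. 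Since you already invoke the Chebotarev count in your single-prime case, this is a local but essential fix: the data you condition on mod $\ell_j$ must be the class, not the factorization type.
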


\begin{proof} The proposition is a refinement of \cite[Theorem 13.3.5(a)]{FrJa08} in the special case $K=\Q$ and a careful tracking of the original proof shows that it essentially establishes this refinement. We omit the details for brevity, noting that this is a fairly standard argument based on the Chebotarev Density Theorem in function fields.\end{proof}

Let $\sigma_1,\ldots,\sigma_m:F\to\R$ be the real embeddings of $F$ and denote by $\mathcal O_F$ the ring of integers of $F$. Denote also $d=[F:\Q]$, let $\alpha\in\mathcal O_F$ be a primitive element of $F/\Q$ and $\alpha_1=\alpha,\,\alpha_2,\ldots,\alpha_d$ its conjugates over $\Q$.

Let $A_1,\ldots,A_n$ be independent variables. We will use the shorthand $\mathbf A=(A_1,\ldots,A_n)$.  Consider the generic monic discriminant
$$D(\mathbf A)=D(A_1,\ldots,A_n)=\disc_X(X^n+A_1X^{n-1}+\ldots+A_n).$$ Denote \begin{equation}\label{eq: prod X-i}\prod_{i=1}^n(X-i)=X^n+\sum_{i=1}^nc_iX^{n-i}\in\Z[X]\end{equation} and set $$P=\prod_{p\le dn(n-1)\atop{\mathrm{prime}}}p.$$

Set
\begin{equation}\label{eq: fxat}f(X;\mathbf A,T)=X^n+\sum_{i=1}^nc_i(T^i+A_i)X^{n-i}+P\alpha A_n\in\mathcal O_F[\mathbf A,T][X].\end{equation}
Substituting $T=0$ we see that $\Gal(f(X;\mathbf A,0)/\overline{\mathbb Q}(\mathbf A))\cong S_n$ and therefore $\Gal(f(X;\mathbf A,T)/\Q(\mathbf A,T))\cong S_n$ (since the Galois group can only shrink under specialization and extension of the base field and both groups are contained in $S_n$).
We aim to show (assuming Hypothesis H) that for a suitable choice of $\mathbf a\in\Z^n,\,t\in\Z$ the specialized polynomial $f(X;\mathbf a,t)\in\mathcal O_F[X]$ has Galois group $S_n$ over $F$ and its splitting field over $F$ is ramified over at most one place. This would establish $r_F(S_n)\le 1$.

Denote \begin{equation}\label{eq: def D_i}D_j(\mathbf A,T)=D\left(c_1(A_1+T),\ldots,c_{n-1}(A_{n-1}+T^{n-1}),c_n(A_n+T^n)+P\alpha_jA_n\right)\in\Q(\alpha_j)[\mathbf A, T],\quad 1\le j\le d.\end{equation} It is well-known that the generic discriminant $D(\mathbf A)$ is absolutely irreducible (i.e. irreducible in $\overline\Q[\mathbf A]$) and consequently $D_j(\mathbf A,T)\in\overline\Q[\mathbf A,T]$ are irreducible (they are obtained from $D$ by an invertible change of variables). It is easy to see that for a suitable choice of primitive element $\alpha$ (which we henceforth assume to have been made) the polynomials $D_1,\ldots,D_d$ are distinct and therefore (e.g. using (\ref{eq: Dj expansion}) below) non-associate. Since $\Gal(\overline \Q/\Q)$ acts transitively on $\alpha_1,\ldots,\alpha_d$ (and hence on $D_1,\ldots,D_d$) it follows that 
\begin{equation}\label{eq: def H}H(\mathbf A,T)=\prod_{j=1}^dD_j(\mathbf A,T)\in\Z[\mathbf A,T]\end{equation} is irreducible in $\Q[\mathbf A,T]$. We note that (by (\ref{eq: fxat}),(\ref{eq: def D_i}),(\ref{eq: def H})) for any $\mathbf a\in\Z^n,\,t\in\Z$ we have
\begin{equation}\label{eq: H disc}H(\mathbf a,t)=N_{F/\Q}(\disc(f(X;\mathbf a,t))).\end{equation} By \cite[Theorem 13.3.5(c) and Corollary 12.2.3]{FrJa08} we can pick $\mathbf a\in\Z^n$ such that
\begin{enumerate}
    \item[(i)]
$H(\mathbf a,T)\in\Z[T]$ is irreducible in $\Q[T]$.
\item[(ii)] $\Gal(f(X;\mathbf a,T)/\Q(T))\cong S_n$. 
\item[(iii)]The reduction modulo $p$ of $X^n+\sum_{i=1}^nc_ia_iX^{n-i}$ is separable for any prime $p|P$
\end{enumerate}
(to satisfy (iii) we just need to pick each $a_i$ in a suitable arithmetic progression modulo $P$).

Fix a choice of $\mathbf a$ satisfying (i-iii). We claim that it automatically satisfies the following strengthening of (i):
\begin{enumerate}\item[(iv)]
$H(\mathbf a,T)$ is irreducible in $\Z[T]$.\end{enumerate}
Indeed it is easy to see from (\ref{eq: def D_i}),(\ref{eq: prod X-i}) and the fact that $D(\mathbf A)$ is homogeneous of degree $n(n-1)$ with respect to the weights $w(A_i)=i$ \cite[(12.1.24)]{GKZ94} that \begin{equation}\label{eq: Dj expansion}D_j(\mathbf A,T)=\left(\prod_{1\le i<k\le n}(k-i)^2\right)T^{n(n-1)}+\mbox{lower degrees in }T\end{equation}
and therefore (using (\ref{eq: def H}))
$$H(\mathbf a,T)=\left(\prod_{1\le i<k\le n}(k-i)^{2d}\right)T^{dn(n-1)}+\mbox{lower degrees in }T.$$
It follows that for a prime $p>n$ we have $p\nmid H(\mathbf a,T)$. If $p\le n$ then $p|P$, and substituting $T=0$ in (\ref{eq: fxat}) and using (iii) and that $H(\mathbf a,0)=N_{F/\Q}(\disc_t(f(X;\mathbf a,0)))$ by (\ref{eq: H disc}), we see that $p\nmid H(\mathbf a,0)$ and therefore $p\nmid H(\mathbf a,T)$ in this case as well. Thus $H(\mathbf a,T)$ is content-free (not divisible by any prime) and (i) implies (iv).

By Proposition \ref{prop: hit} there exist $v,u\in\N$ such that 
\begin{enumerate}
    \item[(v)] $(u,v)=1$.
    \item[(vi)]For any $t\in\mathbb Z$ with $t\equiv u\pmod v$ we have $\Gal(f(X;\mathbf a,t)/\Q)\cong S_n$.
    \item[(vii)] $H(\mathbf a,u)\not\equiv 0\pmod v$.
    \end{enumerate}
A suitable $u$ can be found for a sufficiently large $v$, because for the $v$ given by Proposition \ref{prop: hit} at least $\delta v$ residues $(u\bmod v)\in(\Z/v\Z)^\times$ satisfy (vi) for some $\delta>0$ independent of $v$, and the number of roots of $H(\mathbf a,T)$ modulo $v$ is bounded by a constant independent of $v$, since the latter has a bounded number of prime factors (here we are using that $H(\mathbf a,T)$ is nonzero modulo any prime by (iv)).

Next we want to use Hypothesis H to pick a large $s\in\N$ such that $H(\mathbf a,u+vs)$ is prime. We need to show that there is no local obstruction, i.e. for any (rational) prime $p$ we may pick $s\in\mathbb Z$ such that $p\nmid H(\mathbf a,u+vs)$. If $p|P$ then by (\ref{eq: fxat}),(\ref{eq: H disc}) and assumption (iii) above we have that
$$H(\mathbf a,0)=N_{F/\Q}(\disc_X(f(X;\mathbf a,0)))\not\equiv 0\pmod p.$$
If $p\nmid P$, then once again using the fact that $D(\mathbf A)$ has degree $2n-2$ with respect to the weights $w(A_i)=i$ we obtain
$p> dn(n-1)\ge\deg_T(D_1(\mathbf a,T)\cdots D_d(\mathbf a,T))=\deg_TH(\mathbf a,T)$.  In the case $p\nmid v$ we see that $H(\mathbf a,u+vs)$ has at most $\deg_TH(\mathbf a,T)<p$ roots modulo $p$, so some $s$ is not a root.
In the case $p|v$, by assumption (vii) above we may take $s=0$.
We have established that there is no local obstruction to $H(\mathbf a,u+vs)$ being prime and since $H(\mathbf a,u+vX)$ is irreducible in $\mathbb Q[X]$ (because $H(\mathbf a,X)$ is by (i)), Hypothesis H implies that there are arbitrarily large $t=u+vs$ with the following properties:
\begin{enumerate}
\item[(viii)] $H(\mathbf a,t)$ is prime.
\item[(ix)] $t\equiv u\pmod v$.
\end{enumerate}

We claim that for sufficiently large $t$ satisfying (viii-ix) the polynomial $f(X;\mathbf a,t)$ satisfies the required properties. First observe that by (vi) above we have $\Gal(f(X;\mathbf a,t)/\Q)\cong S_n$. Next since $H(\mathbf a,t)=N_{F/\Q}(\disc_X(f(X;\mathbf a,t)))$ is prime, we have that $\mathfrak q=(\disc_Xf(X;\mathbf a,t))$ is a prime ideal of $\mathcal O_F$ and is therefore the only possible finite place over which the splitting field of $f(X;\mathbf a,t)$ over $F$ ramifies. Finally if for each embedding $\sigma_j:F\to\R$ and polynomial $g=\sum{g_i}X^i\in F[X]$ we denote $g^{\sigma_j}=\sum\sigma_j(g_i)X^i\in\R[X]$, we see from (\ref{eq: prod X-i}) and (\ref{eq: fxat}) that the roots of $f(tX;\mathbf a,t)^{\sigma_j}$ approach $1,2,\ldots,n$ as $t\to\infty$ and in particular have to be real for $t$ sufficiently large (since they cannot occur in conjugate pairs). Consequently the splitting field of $f(X;\mathbf a,t)$ over $F$ is unramified over the infinite places of $F$ and is therefore ramified only over $\mathfrak q$. This concludes the proof that $r_F(S_n)\le 1$ and establishes Proposition \ref{prop: schinzel}.

\section{Function fields: proof of Theorem \ref{thm: ff}}
\label{sec: function fields}

Let $F$ be a global function field with field of constants $\F_q$ and characteristic $p$, $P$ a place of $F$.
A simple consequence of the Riemann-Roch theorem is that there exists $T\in F\setminus\F_q$ that has a pole only at $P$ (see e.g. \cite[Theorem 6.13]{Ros02} for a more general statement). 

If $p>2$, by \cite[Theorem 1]{EnPi24} for any $n\ge p$ there exists a Galois extension $M_n/\F_q(T)$ with $\Gal(M_n/\F_q(T))\\=A_n$, ramified only over the place $T=\infty$. In the case $p=2$ such an extension exists for any $9\le n\equiv 1\pmod 8$ by \cite[Theorem 2]{Abh93} and \cite[Equation (5.27)]{AOS94} (one takes the splitting field of $f(X)=X^n+TX^{n-4}+1$). In what follows we only consider $n$ such that $\max(p,9)\le n\equiv 1\pmod 8$.

The extension $M_n/\F_q(T)$ is necessarily geometric (since $A_n$ has no cyclic quotients). The extensions $M_n/\F_q(T)$ are all pairwise linearly disjoint since the groups $A_n$ are simple and non-isomorphic. Therefore we may find $n\ge n_0(G)$ such that $M=M_n$ is linearly disjoint from $F$ over $\F_q(T)$. Here $n_0(G)$ is chosen to satisfy the assertion of Proposition \ref{prop: group theory main}.

Let $L$ be the compositum of $F$ and $M$ over $\F_q(T)$ (inside a common algebraic closure). Since $F,M$ are linearly disjoint over $\F_q(T)$ we have $\Gal(L/F)\cong\Gal(M/\F_q(T))\cong A_n$. The extension $L/F$ is geometric because $A_n$ has no cyclic quotients. The extension $L/F$ can only ramify over a place of $F$ that lies over the place $T=\infty$ of $\F_q(T)$ (since $T=\infty$ is the only ramified place of $M/\F_q(T)$). By assumption the only pole of $T$ is $P$, so $P$ is the only place lying over $T=\infty$ and $L/F$ is unramified outside of $P$. 

Denote $\Gamma=\Gal(L/F)\cong A_n$. By Proposition \ref{prop: group theory main} and the assumption $n\ge n_0(G)$, there exists $H\leqslant \Gamma$ such that $N_\Gamma(H)/H\cong G$. By Proposition \ref{prop: galois theory} there exists a subextension $F\subset K\subset L$ with $\Aut(K/F)\cong G$. It is geometric and unramified outside of $P$ (since $L/F$ is), so it satisfies the requirements of Theorem \ref{thm: ff}. This completes the proof of Theorem \ref{thm: ff}.

\bibliography{mybib}
\bibliographystyle{alpha}

\end{document}